\documentclass[12pt]{amsart}
\usepackage{amsmath,amsfonts,amsthm,amssymb,bbm,tikz,url}
\usetikzlibrary{decorations.pathmorphing}
\usetikzlibrary{calc}
\usepackage[margin=1.in]{geometry}

\newcommand{\reals}{\mathbb{R}}
\newcommand{\comps}{\mathbb{C}}

\newcommand{\nats}{\mathbb{N}}

\newtheorem{thm}{Theorem}[section]

\newtheorem{prop}[thm]{Proposition}
\newtheorem{cor}[thm]{Corollary}

\theoremstyle{definition}

\newtheorem{defn}[thm]{Definition}

\begin{document}
\title{Link Patterns and the Catalan Tree}

\author[S. Ng]{Stephen Ng}
\address{Department of Mathematics\\
University of Rochester\\
Rochester, NY 14627, USA}
\email{ng@math.rochester.edu}

\maketitle

\begin{abstract}
	We demonstrate that a natural construction based on the two notions of insertion of a strand and finding the preimages of Temperley-Lieb algebra generators give an inductive means to generate all link patterns of a given number of strands. It is shown that the structure of the Catalan tree (as defined by Julian West) arises in the process of this induction, and that it can be exploited to give some refined enumerations of link patterns.
\end{abstract}

\section{Introduction}
Link patterns, though most recently associated to fully packed loops through the Razumov-Stroganov correspondence proved by Cantini and Sportiello \cite{CantiniSportiello}, have a long history in the development of the graphical calculus for the representation theory of the Lie algebra $\mathfrak{sl}_2$ and, more generally, the closely related $q$-deformation, $U_q(\mathfrak{sl}_2)$ (See \cite{Chari1994,Frenkel1997,Kauffman1994,KuperbergSpiders,Penrose}). In this framework, link patterns are a special subclass of one-dimensional representations of $U_q(\mathfrak{sl}_2)$. Following the general setup of Schur-Weyl duality, it is known that the Temperley-Lieb algebra is the $U_q(\mathfrak{sl}_2)$ invariant algebra of operators acting on link patterns. We omit these details in favor of a simpler approach. We follow an exposition in the same spirit as the completely graphical description given by Cantini and Sportiello and the many others who have worked on problems related to the Razumov-Stroganov correspondence \cite{CantiniSportiello,deGier,Zuber}. Even so, we point out that link patterns are natural objects which arise in many different contexts and that link patterns are special objects in the Catalan combinatorial space precisely because of the very nice action of the Temperley-Lieb algebra.

After introducing link patterns and the Temperley-Lieb algebra, this paper seeks to resolve the issue of efficiently generating the set of all link patterns of a given number of strands. For computer simulations involving link patterns, efficiency is important because the number of link patterns of $n$ strands grows asymptotically as $O(\frac{4^n}{n^{3/2}})$. The method is an inductive one commonly known as the ECO method of constructing a generating tree (see \cite{bdpp,West}): given all link patterns of $n-1$ strands, we describe a means to generate all link patterns of $n$ strands by ``inverting'' the action of a fixed Temperley-Lieb algebra generator. The generating tree resulting from this construction, also known as the Catalan tree, induces bijections to other well known objects in the Catalan combinatorial space, and allows us to obtain refined enumerations of link patterns.

The structure of the remainder of the paper follows the pattern set in the previous paragraph. Section \ref{sec:catalanTree} introduces the Catalan tree, following West's exposition for pattern avoiding permutations \cite{West}. Section \ref{sec:lp} introduces link patterns, the Temperley-Lieb algebra, and establishes the Catalan tree structure for link patterns. Lastly, section \ref{sec:enumerations} exploits the bijection to Dyck paths induced by the Catalan tree structure to obtain refined enumerations of link patterns. 
\section{The Catalan Tree}
\label{sec:catalanTree}
Many combinatorial objects are known to be counted by the Catalan numbers. The Catalan numbers are typically introduced by way of the inductive formula, $C_n= \sum_{i=0}^n C_i C_{n-i}$ with $C_0=1$, and in like manner, the Catalan Tree gives an inductive means to generate objects counted by $C_n=\frac{1}{n+1}\binom{2n}{n}$.
We define the Catalan Tree (following West) via the following labels and the succession rule:
\begin{enumerate}
	\item The root of the Catalan tree has label 2.
	\item A node with label $i$ will have a total of $i$ children with the distinct labels $2, 3, \ldots, i+1$.
\end{enumerate}
It is customary to write 
\begin{equation*}
	(k) \to (2)(3) \cdots (k+1)
\end{equation*}
to indicate that nodes with label $k$ give rise to $k$ children with labels $2,3, \ldots, k+1$. Related generating trees constructed by alternative succession rules have come to be known collectively as the ECO method, after the technique described abstractly by Barcucci, Del Lungo, Pergola, and Pinzani.
See Figure \ref{fig:CatalanTree} to see the Catalan tree up to level 2.

\begin{figure}[h]
	\begin{center}
\begin{tikzpicture}
	[every node/.style={draw=black, circle}, level 1/.style={sibling distance=6em}, level 2/.style={sibling distance=2em}, edge from parent/.style={draw=black!50}]
	\node {2}
		child {node {2}
			child {node {2}}
			child {node {3}}
		}
		child {node {3}
			child {node {2}}
			child {node {3}}
			child {node {4}}
		};
\end{tikzpicture}
	\end{center}
	\caption{The Catalan tree displayed up to level 2.}
	\label{fig:CatalanTree}
\end{figure}
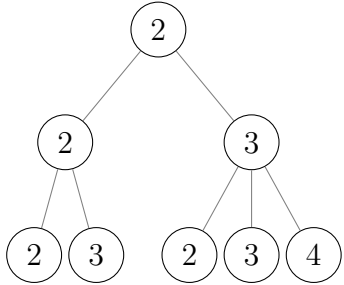
West introduced this tree to describe an inductive method to generate all permutations of length $n$ which avoid a fixed pattern, $\tau$, of length three. We shall only concern ourselves with the case $\tau=123$, and we follow West's terminology and exposition.
\begin{defn}
	Let $\sigma \in S_n$ and let $\tau=123 \in S_3$. We say that $\sigma$ is \emph{$123$-avoiding} if there is no $1 \leq i_{1} < i_{2} < i_{3} \leq n$ such that $\sigma(i_1) < \sigma(i_2) < \sigma(i_3)$. 
\end{defn}

Let $S_n(123)$ denote all permutations of size $n$ which are $123$-avoiding. Let us construct an element of $S_{n+1}(123)$ by the following \emph{insertion} process: given a permutation $\sigma \in S_n(123)$, obtain a new permutation $(\sigma(1),\sigma(2),\ldots,\sigma(i-1),(n+1), \sigma(i),\ldots,\sigma(n))$, where the entry $n+1$ is in position $i$ and the new permutation is $123$-avoiding. For an arbitrary $\sigma \in S_n(123)$, let $i$ be the first index such that $\sigma(i) < \sigma(i+1)$. Then one can see that the insertion of $n+1$ into any of the positions $1, 2, 3, \ldots, i, i+1$ yields an element of $S_{n+1}(123)$. Thus, by letting the root of the tree be the trivial permutation in $S_1$ and allowing the succession rule to be all permutations obtained by insertion of $n+1$ which preserve $123$-avoidance, we see that we recover the structure of the Catalan tree. See Figure \ref{fig:123avoid} for a diagram of the tree of these permutations up to $S_3(123)$.

\begin{figure}[h]
	\begin{center}
		\begin{tikzpicture}
			[every node/.style={draw=black, rectangle}, level 1/.style={sibling distance=7em}, level 2/.style={sibling distance=2.5em},edge from parent/.style={draw=black!50}]
	\node {1}
		child {node {12}
			child {node {132}}
			child {node {312}}
		}
		child {node {21}
			child {node {231}}
			child {node {213}}
			child {node {321}}
		};
		\end{tikzpicture}
	\end{center}
	\caption{The Catalan tree of $123$-avoiding permutations up to level 2.}
	\label{fig:123avoid}
\end{figure}
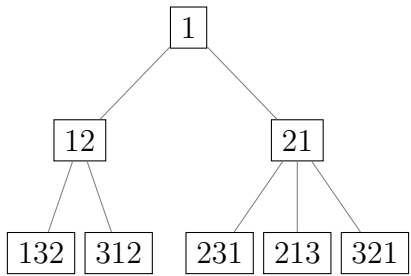

It is well known that $\tau$-avoiding permutations for $\tau \in S_3$ are counted by the Catalan numbers (see \cite{knuth}).

\section{Link Patterns and the Catalan Tree}
\label{sec:lp}
We now introduce link patterns and demonstrate how they may be generated inductively using the Catalan Tree. 
\begin{defn}
	Fix $n$, a positive integer. Let $D$ be the closed unit disk in $\comps$. Label the $2n$th roots of unity $0,1,2, \ldots, 2n-1$ such that $k$ corresponds to $e^{2\pi i (k/2n)}$. A \emph{matching} is a partition of the set of points $\{0, 1, 2, \ldots, 2n-1\}$ into pairs of points. We represent this graphically by drawing a line from $i$ to $j$ if $i$ and $j$ form a pair in the given partition. A \emph{link pattern of $n$ strands} is a matching of $\{0,1,2, \ldots, 2n-1\}$ in which no lines cross in the graphical representation. Figures \ref{fig:lp4example} and \ref{fig:4strandlps} provide examples.
\end{defn}
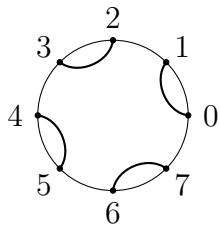
\begin{figure}[h]
	\begin{center}
		\begin{tikzpicture}
			\draw (0,0) circle (1cm);
			\foreach \x in {0, 1, ...,7}
			{
				\draw[fill] (45*\x:1) circle (1pt);
				\node at (45*\x:1.3) {$\x$};
			}
			\foreach \y in {0,90, ...,270}
			\draw[thick, bend left=60] (\y:1) to (\y+45:1);
		\end{tikzpicture}
	\end{center}
	\caption{An example link pattern of $4$ strands. We omit the numerals in subsequent diagrams.}
	\label{fig:lp4example}
\end{figure}
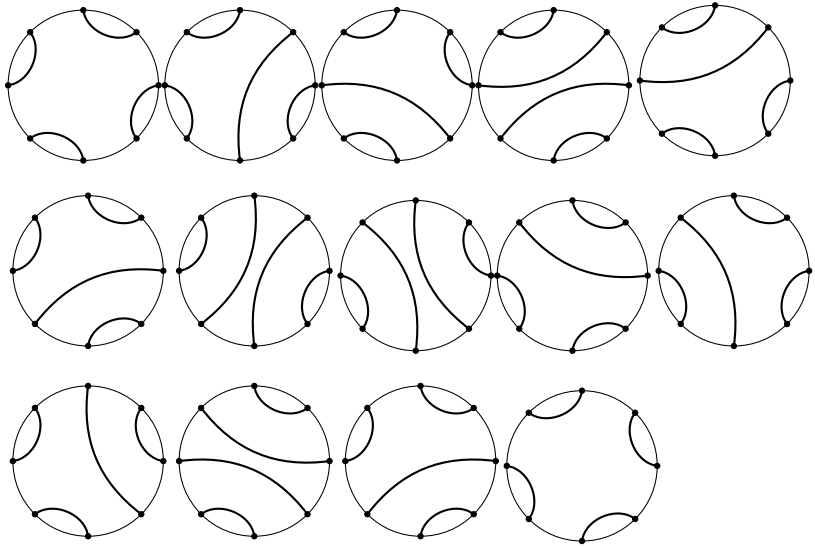
\begin{figure}[h]
	\begin{align*}
		&
		\begin{tikzpicture}
			\draw (0,0) circle (1cm);
			\foreach \x in {1,2,3,4,5,6,7,0}
			{
				\draw[fill] (45*\x:1) circle (1pt);
			}
			\foreach \y in {45,135, ...,315}
			\draw[thick, bend left=60] (\y:1) to (\y+45:1);
		\end{tikzpicture}
		\begin{tikzpicture}
			\draw (0,0) circle (1cm);
			\foreach \x in {0, 1, ...,7}
			{
				\draw[fill] (45*\x:1) circle (1pt);
			}
			\foreach \y in {315,90,180}
			\draw[thick, bend left=60] (\y:1) to (\y+45:1);
			\draw[thick,bend right=30] (45:1) to (270:1);
		\end{tikzpicture}
		\begin{tikzpicture}[rotate=-90]
			\draw (0,0) circle (1cm);
			\foreach \x in {0, 1, ...,7}
			{
				\draw[fill] (45*\x:1) circle (1pt);
			}
			\foreach \y in {315,90,180}
			\draw[thick, bend left=60] (\y:1) to (\y+45:1);
			\draw[thick,bend right=30] (45:1) to (270:1);
		\end{tikzpicture}
		\begin{tikzpicture}
			\draw (0,0) circle (1cm);
			\foreach \x in {0, 1, ...,7}
			{
				\draw[fill] (45*\x:1) circle (1pt);
			}
			\foreach \y in {90,270}
			\draw[thick, bend left=60] (\y:1) to (\y+45:1);
			\draw[thick, bend left=30] (45:1) to (180:1);
			\draw[thick, bend left=30] (225:1) to (0:1);
		\end{tikzpicture}
		\begin{tikzpicture}[rotate=135]
			\draw (0,0) circle (1cm);
			\foreach \x in {0, 1, ...,7}
			{
				\draw[fill] (45*\x:1) circle (1pt);
			}
			\foreach \y in {315,90,180}
			\draw[thick, bend left=60] (\y:1) to (\y+45:1);
			\draw[thick,bend right=30] (45:1) to (270:1);
		\end{tikzpicture} \\
		&
		\begin{tikzpicture}[rotate=-45]
			\draw (0,0) circle (1cm);
			\foreach \x in {0, 1, ...,7}
			{
				\draw[fill] (45*\x:1) circle (1pt);
			}
			\foreach \y in {315,90,180}
			\draw[thick, bend left=60] (\y:1) to (\y+45:1);
			\draw[thick,bend right=30] (45:1) to (270:1);
		\end{tikzpicture}
		\begin{tikzpicture}[rotate=45]
			\draw (0,0) circle (1cm);
			\foreach \x in {0, 1, ...,7}
			{
				\draw[fill] (45*\x:1) circle (1pt);
			}
			\foreach \y in {90,270}
			\draw[thick, bend left=60] (\y:1) to (\y+45:1);
			\draw[thick, bend left=30] (45:1) to (180:1);
			\draw[thick, bend left=30] (225:1) to (0:1);
		\end{tikzpicture}
		\begin{tikzpicture}[rotate=90]
			\draw (0,0) circle (1cm);
			\foreach \x in {0, 1, ...,7}
			{
				\draw[fill] (45*\x:1) circle (1pt);
			}
			\foreach \y in {90,270}
			\draw[thick, bend left=60] (\y:1) to (\y+45:1);
			\draw[thick, bend left=30] (45:1) to (180:1);
			\draw[thick, bend left=30] (225:1) to (0:1);
		\end{tikzpicture}
		\begin{tikzpicture}[rotate=90]
			\draw (0,0) circle (1cm);
			\foreach \x in {0, 1, ...,7}
			{
				\draw[fill] (45*\x:1) circle (1pt);
			}
			\foreach \y in {315,90,180}
			\draw[thick, bend left=60] (\y:1) to (\y+45:1);
			\draw[thick,bend right=30] (45:1) to (270:1);
		\end{tikzpicture}
		\begin{tikzpicture}[rotate=225]
			\draw (0,0) circle (1cm);
			\foreach \x in {0, 1, ...,7}
			{
				\draw[fill] (45*\x:1) circle (1pt);
			}
			\foreach \y in {315,90,180}
			\draw[thick, bend left=60] (\y:1) to (\y+45:1);
			\draw[thick,bend right=30] (45:1) to (270:1);
		\end{tikzpicture}\\
		&
		\begin{tikzpicture}[rotate=45]
			\draw (0,0) circle (1cm);
			\foreach \x in {0, 1, ...,7}
			{
				\draw[fill] (45*\x:1) circle (1pt);
			}
			\foreach \y in {315,90,180}
			\draw[thick, bend left=60] (\y:1) to (\y+45:1);
			\draw[thick,bend right=30] (45:1) to (270:1);
		\end{tikzpicture}
		\begin{tikzpicture}[rotate=135]
			\draw (0,0) circle (1cm);
			\foreach \x in {0, 1, ...,7}
			{
				\draw[fill] (45*\x:1) circle (1pt);
			}
			\foreach \y in {90,270}
			\draw[thick, bend left=60] (\y:1) to (\y+45:1);
			\draw[thick, bend left=30] (45:1) to (180:1);
			\draw[thick, bend left=30] (225:1) to (0:1);
		\end{tikzpicture}
		\begin{tikzpicture}[rotate=-45]
			\draw (0,0) circle (1cm);
			\foreach \x in {0, 1, ...,7}
			{
				\draw[fill] (45*\x:1) circle (1pt);
			}
			\foreach \y in {315,90,180}
			\draw[thick, bend left=60] (\y:1) to (\y+45:1);
			\draw[thick,bend right=30] (45:1) to (270:1);
		\end{tikzpicture}
		\begin{tikzpicture}
			\draw (0,0) circle (1cm);
			\foreach \x in {0, 1, ...,7}
			{
				\draw[fill] (45*\x:1) circle (1pt);
			}
			\foreach \y in {0,90, ...,270}
			\draw[thick, bend left=60] (\y:1) to (\y+45:1);
		\end{tikzpicture}
	\end{align*}
	\caption{All link patterns of four strands.}
	\label{fig:4strandlps}
\end{figure}
\begin{defn}
	Fix $n$, a positive integer. Let $LP_n$ denote the formal vector space over $\comps$ of all link patterns of $n$ strands and let the standard basis of link patterns be called the Temperley-Lieb basis of $LP_n$. Define the \emph{Temperley-Lieb algebra}, denoted $TL_n$, to be the subalgebra of the algebra of endomorphisms of $LP_n$ generated by elements $e_0, e_1, \ldots, e_{2n-1}$ subject to the relations:
\begin{enumerate}
	\item $e_i^2 = e_i$ for $i\in \{0,1,\ldots, 2n-1\}$,
	\item $e_{i} e_{i\pm 1} e_i = e_i$ for $i\in \{0,1,\ldots, 2n-1\}$,
	\item $e_i e_j = e_j e_i$ whenever $|i-j|>1$.
\end{enumerate}
The elements $e_0, e_1, \ldots, e_{2n-1}$ are known as Temperley-Lieb generators and with minor modifications, $TL_n$ plays a significant role in the representation theory of $U_q(\mathfrak{sl}_2)$ through a $q$-generalization of Schur-Weyl duality (See \cite{Chari1994,Frenkel1997,Jimbo1986}).
These relations admit a simple graphical realization seen in Figure \ref{fig:TLA}:
\begin{figure}[h]
	\begin{center}
		\begin{tikzpicture}[scale=.5]
			\draw (0,0) circle (1cm);
			\draw (0,0) circle (2cm);
			\foreach \x in {0,45,90,...,315}{
				\filldraw (\x:1cm) circle (2pt);
				\filldraw (\x:2cm) circle (2pt);}
			\draw[thick, bend right=90] (135:1cm) to (180:1cm);
			\draw[thick, bend left=90] (135:2cm) to (180:2cm);
			\foreach \y in {0,45,90,225,270,315}
				\draw[thick] (\y:1cm) to (\y:2cm);
		\end{tikzpicture}
		\begin{tikzpicture}[scale=.5,rotate=45]
			\draw (0,0) circle (1cm);
			\draw (0,0) circle (2cm);
			\foreach \x in {0,45,90,...,315}{
				\filldraw (\x:1cm) circle (2pt);
				\filldraw (\x:2cm) circle (2pt);}
			\draw[thick, bend right=90] (135:1cm) to (180:1cm);
			\draw[thick, bend left=90] (135:2cm) to (180:2cm);
			\foreach \y in {0,45,90,225,270,315}
				\draw[thick] (\y:1cm) to (\y:2cm);
		\end{tikzpicture}
		\begin{tikzpicture}[scale=.5,rotate=90]
			\draw (0,0) circle (1cm);
			\draw (0,0) circle (2cm);
			\foreach \x in {0,45,90,...,315}{
				\filldraw (\x:1cm) circle (2pt);
				\filldraw (\x:2cm) circle (2pt);}
			\draw[thick, bend right=90] (135:1cm) to (180:1cm);
			\draw[thick, bend left=90] (135:2cm) to (180:2cm);
			\foreach \y in {0,45,90,225,270,315}
				\draw[thick] (\y:1cm) to (\y:2cm);
		\end{tikzpicture}
		\ldots
	\end{center}
	\caption{The Temperley Lieb Algebra generators}
	\label{fig:TLA}
\end{figure}
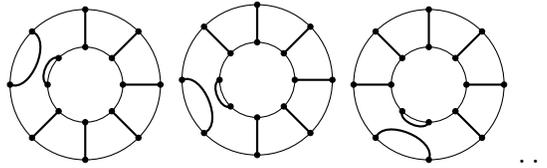
\end{defn}
In the graphical representation, if we label $2n$ points on the internal circle and external circle of the annulus in the same manner as for link patterns, then $e_i$ is the diagram in which all internal points are connected to their corresponding external points with a link except for $i$ and $i+1$. On the external circle the points labeled $i$ and $i+1$ are connected by a link, and likewise, a link connects $i$ and $i+1$ on the internal circle, where addition by $1$ is conducted modulo $2n$. The sum of elements in the Temperley-Lieb algebra is given by the formal sum of diagrams. The product of generators in the Temperley-Lieb algebra is given by concatenation of diagrams, and the first two relations are realized by ignoring loops and isotopies of diagrams:
\begin{equation*}
	e_i^2 = 
	\raisebox{-1.5cm}{
	\begin{tikzpicture}[scale=.5]
			\draw (0,0) circle (1cm);
			\draw (0,0) circle (2cm);
			\foreach \x in {0,45,90,...,315}{
				\filldraw (\x:1cm) circle (2pt);
				\filldraw (\x:2cm) circle (2pt);}
			\draw[thick, bend right=90] (135:1cm) to (180:1cm);
			\draw[thick, bend left=90] (135:2cm) to (180:2cm);
			\foreach \y in {0,45,90,225,270,315}
				\draw[thick] (\y:1cm) to (\y:2cm);
			\draw (0,0) circle (3cm);
			\foreach \x in {0,45,90,...,315}{
				\filldraw (\x:3cm) circle (3pt);}
			\draw[thick, bend right=90] (135:2cm) to (180:2cm);
			\draw[thick, bend left=30] (135:3cm) to (180:3cm);
			\foreach \y in {0,45,90,225,270,315}
				\draw[thick] (\y:1cm) to (\y:3cm);
			\end{tikzpicture}}
			= 
			\raisebox{-1.5cm}{
		\begin{tikzpicture}[scale=.7]
			\draw (0,0) circle (1cm);
			\draw (0,0) circle (2cm);
			\foreach \x in {0,45,90,...,315}{
				\filldraw (\x:1cm) circle (2pt);
				\filldraw (\x:2cm) circle (2pt);}
			\draw[thick, bend right=90] (135:1cm) to (180:1cm);
			\draw[thick, bend left=90] (135:2cm) to (180:2cm);
			\foreach \y in {0,45,90,225,270,315}
				\draw[thick] (\y:1cm) to (\y:2cm);
			\end{tikzpicture}} = e_i 
		\end{equation*}
		\begin{equation*}
			e_i e_{i+1} e_i = 
			\raisebox{-1.5cm}{
			\begin{tikzpicture}[scale=.35]
			\draw (0,0) circle (1cm);
			\draw (0,0) circle (2cm);
			\foreach \x in {0,45,90,...,315}{
				\filldraw (\x:1cm) circle (2pt);
				\filldraw (\x:2cm) circle (2pt);}
			\draw[thick, bend right=90] (135:1cm) to (180:1cm);
			\draw[thick, bend left=90] (135:2cm) to (180:2cm);
			\foreach \y in {0,45,90,225,270,315}
				\draw[thick] (\y:1cm) to (\y:2cm);
			\draw (0,0) circle (3cm);
			\draw (0,0) circle (4.5cm);
			\foreach \x in {0,45,90,...,315}{
				\filldraw (\x:3cm) circle (3pt);}
			\foreach \x in {0,45,90,...,315}{
				\filldraw (\x:4.5cm) circle (4pt);}
			\draw[thick, bend right=90] (180:2cm) to (225:2cm);
			\draw[thick, bend left=30] (180:3cm) to (225:3cm);
			\draw[thick, bend right=90] (135:3cm) to (180:3cm);
			\draw[thick, bend left=30] (135:4.5cm) to (180:4.5cm);
			\foreach \y in {0,45,90,225,270,315}
				\draw[thick] (\y:1cm) to (\y:2cm);
			\foreach \y in {0,45,90,135,270,315}
				\draw[thick] (\y:2cm) to (\y:3cm);
			\foreach \y in {0,45,90,225,270,315}
				\draw[thick] (\y:3cm) to (\y:4.5cm);
			\end{tikzpicture}} = 
			\raisebox{-1.5cm}{
		\begin{tikzpicture}[scale=.7]
			\draw (0,0) circle (1cm);
			\draw (0,0) circle (2cm);
			\foreach \x in {0,45,90,...,315}{
				\filldraw (\x:1cm) circle (2pt);
				\filldraw (\x:2cm) circle (2pt);}
			\draw[thick, bend right=90] (135:1cm) to (180:1cm);
			\draw[thick, bend left=90] (135:2cm) to (180:2cm);
			\foreach \y in {0,45,90,225,270,315}
				\draw[thick] (\y:1cm) to (\y:2cm);
			\end{tikzpicture}} = e_i.
\end{equation*}
The relations allow us to think of the Temperley-Lieb algebra, $TL_n$, intuitively as the set of diagrams on an annulus connecting $2n$ external points to $2n$ internal points in such a way that no two connecting lines cross. Internal loops may be ignored due to the first relation, and diagrams are equivalent up to isotopy due to the second relation.
The action of $TL_n$ on $LP_n$ is also given by concatenation: 
\begin{equation*}
		e_i(\raisebox{-.4cm}{
        \begin{tikzpicture}[scale=.5]
            \draw (0,0) circle (1cm);
            \foreach \y in {90,225,315}
                \draw[thick,bend left=90] (\y:1cm) to (\y+45:1cm);
                \draw[thick, bend left=45] (45:1cm) to (180:1cm);
            \foreach \x in {0,45,90,...,315}
                \filldraw (\x:1cm) circle (2pt);
	\end{tikzpicture}}\,)
		=
		       \raisebox{-1cm}{
        \begin{tikzpicture}[scale=.5]
            \draw (0,0) circle (1cm);
            \draw (0,0) circle (2cm);
            \foreach \x in {0,45,90,...,315}
                \filldraw (\x:1cm) circle (2pt);
            \foreach \x in {0,45,90,...,315}
                \filldraw (\x:2cm) circle (2pt);
            \foreach \y in {90,225,315}
                \draw[thick,bend left=90] (\y:1cm) to (\y+45:1cm);
                \draw[thick, bend left=45] (45:1cm) to (180:1cm);
            \foreach \x in {0,45,90,...,315}{
                \filldraw (\x:1cm) circle (2pt);
                \filldraw (\x:2cm) circle (2pt);}
            \draw[thick, bend right=90] (135:1cm) to (180:1cm);
            \draw[thick, bend left=90] (135:2cm) to (180:2cm);
            \foreach \y in {0,45,90,225,270,315}
                \draw[thick] (\y:1cm) to (\y:2cm);
        \end{tikzpicture}   
        } =
        \raisebox{-.7cm}{
        \begin{tikzpicture}[scale=.7]
            \draw (0,0) circle (1cm);
            \foreach \x in {0,45,90,...,315}
                \filldraw (\x:1cm) circle (2pt);
            \foreach \y in {45,135,225,315}
                \draw[thick,bend left=90] (\y:1cm) to (\y+45:1cm);
        \end{tikzpicture}   
        },
\end{equation*}
and the resulting diagrams are also taken to be invariant under isotopy and removal of loops.
\begin{defn}
	The \emph{insertion of a strand at $i$ and $i+1$} is a function from the Temperley-Lieb basis of $LP_n$ to the Temperley-Lieb basis of $LP_{n+1}$ defined by taking $\pi \in LP_n$, inserting two additional points on the unit circle in between $i-1$ and $i$, relabeling the points sequentially so that the newly added points are labeled $i$ and $i+1$, inserting a link between $i$ and $i+1$, and repositioning the points to lie on the roots of unity. An example of the insertion of a strand procedure is seen in Figure \ref{fig:insertexample}.
	
	Let $A_i$ be the subset of $LP_n$ which is obtained from $LP_{n-1}$ by the insertion of a strand at $i$ and $i+1$. Define the \emph{deletion of a strand at $i$ and $i+1$} as a function from $A_i$ to $LP_{n-1}$ in which the strand connecting $i$ and $i+1$ and the points $i$ and $i+1$ are removed, and the points $i+2, i+3, \ldots, 2n-1$ are relabeled and repositioned to fill in the gap caused by the deletion of $i$ and $i+1$ in the sequence $0, 1, \ldots, 2n-3$. 
\end{defn}
\begin{figure}[h]
	\begin{equation*}
		\raisebox{-1.5cm}{
		\begin{tikzpicture}
			\draw (0,0) circle (1cm);
			\foreach \x in {0, 1, ...,7}
			{
				\draw[fill] (45*\x:1) circle (1pt);
				\node at (45*\x:1.3) {$\x$};
			}
			\foreach \y in {0,90, ...,270}
			\draw[thick, bend left=60] (\y:1) to (\y+45:1);
		\end{tikzpicture}
	} \to 
	\raisebox{-1.5cm}{
		\begin{tikzpicture}
			\draw (0,0) circle (1cm);
			\foreach \y in {0,72,288}
			\draw[thick, bend left=60] (\y:1) to (\y+36:1);
			\draw[thick,bend left=60] (144:1) to (252:1);
			\draw[thick,bend left=60,color=red] (144+36:1) to (144+72:1);
			\foreach \x in {0, 1, ...,9}
			{
				\draw[fill] (36*\x:1) circle (1pt);
				\node at (36*\x:1.3) {$\x$};
			}
		\end{tikzpicture}
	}
	\end{equation*}
	\caption{An example of an insertion of a link between $5$ and $6$. The inserted strand has been highlighted in red.}
	\label{fig:insertexample}
\end{figure}
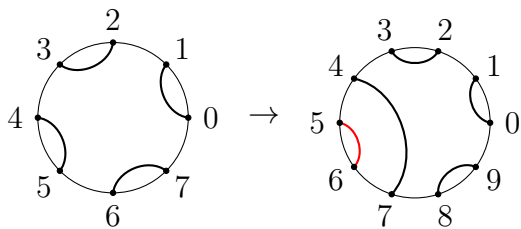
We now give an alternate representation of a link pattern in the upper half-plane of $\reals^2$. 
\begin{defn}
	By \emph{cut and unfold between $i$ and $i+1$}, we mean the operation on a fixed link pattern in which we cut the unit circle between $i$ and $i+1$ and unwrap the circle onto a horizontal line, taking care to preserve links between numbers in the half-plane above the line, and lastly, embedding the resulting diagram into the upper half-plane. In this cut and unfolded form, links subdivide the half-plane into components, and we define \emph{outermost links} to be those links which are incident to the unbounded component. Given a fixed link pattern of n strands, $\pi$, and a fixed choice of a cut and unfold, let the \emph{exposure number of $\pi$} be the number of outermost links. An example of a cut and unfold and its associated outermost links is given in Figure \ref{fig:cutUnfold}.
\end{defn}
\begin{figure}[h]
	\begin{equation*}
		\raisebox{-1.5cm}{
		\begin{tikzpicture}
			\draw (0,0) circle (1cm);
			\foreach \x in {0, 1, ...,7}
			{
				\draw[fill] (45*\x:1) circle (1pt);
				\node at (45*\x:1.3) {$\x$};
			}
			\foreach \y in {90,180,270}
			\draw[thick, bend left=60] (\y:1) to (\y+45:1);
			\draw[dashed] (22.5:0.8) to (22.5:1.25);
			\draw[color=red,thick, bend left=60] (0:1) to (45:1);
		\end{tikzpicture}
	} \to 
	\raisebox{-1cm}{
		\begin{tikzpicture}[scale=.5]
		\draw (-.5,0) -- (7.5,0);
		\draw[thick] (1,0) arc (180:0:.5);
		\draw[color=red,thick,bend left=90] (0,0) to (7,0);
		\draw[thick] (3,0) arc (180:0:.5);
		\draw[thick] (5,0) arc (180:0:.5);
			\foreach \x in {1,2, ...,7}
			{
				\draw[fill] (\x,0) circle (2pt);
				\node at (\x-1,-.5) {$\footnotesize{\x}$};
			}
			\node at (7,-.5) {$\footnotesize{0}$};
		\end{tikzpicture}
	}
	\end{equation*}
	\caption{Cut and unfold between 0 and 1. Outermost links have been highlighted in red.}
	\label{fig:cutUnfold}
\end{figure}
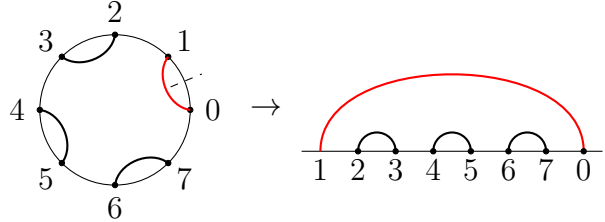
Outermost links with respect to a fixed cut and unfold allow us to characterize the preimages of the Temperley-Lieb generators $e_i$. 
\begin{prop}

	Without loss of generality, in all that follows consider $e_{2n-1}$ and note that results for other $e_i$ can be reduced to this special case by repeated rotations. Let $\pi$ be a link pattern in $LP_{n-1}$ suppose that $\pi$ has exposure number $k$ when cut and unfolded between $2n-2$ and $0$. Let $\pi^\prime \in LP_{n}$ be the result of inserting a link at $2n-1$ and $0$ to $\pi$. Then the preimage of $\pi^\prime$ under the map $e_{2n-1}$ is characterized by $k+1$ distinct link patterns in $LP_n$.
	\label{prop:preimage}
\end{prop}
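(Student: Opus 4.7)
The plan is to classify all $\sigma \in LP_n$ with $e_{2n-1}(\sigma) = \pi'$ according to the links at the positions $2n-1$ and $0$, and then match the valid cases against the outermost arcs of $\pi'$ in the cut-and-unfold picture.

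I would first work out the graphical action of $e_{2n-1}$ on an arbitrary $\sigma \in LP_n$. If $\sigma$ already contains the link $2n-1 \leftrightarrow 0$, then concatenation with the annulus diagram for $e_{2n-1}$ closes a loop (removed by relation (1)) and reproduces $\sigma$, so $\sigma = \pi'$ is itself a preimage. Otherwise $\sigma$ has links $2n-1 \leftrightarrow a$ and $0 \leftrightarrow b$ for distinct $a,b \in \{1,\ldots,2n-2\}$; tracing strands through the annulus shows that $e_{2n-1}(\sigma)$ is obtained from $\sigma$ by replacing these two links with the outer link $2n-1 \leftrightarrow 0$ and a new link $a \leftrightarrow b$, leaving all other links intact. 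The equation $e_{2n-1}(\sigma)=\pi'$ thus forces $a \leftrightarrow b$ to be a link of $\pi'$, and with that link chosen, $\sigma$ is determined up to the pairing between $\{2n-1,0\}$ and $\{a,b\}$; in the half-plane picture with $2n-1$ and $0$ at the far left, one pairing nests the two new arcs and the other interlocks them, so only the nested pairing has a chance of being non-crossing.

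The heart of the argument is then to show that the nested replacement actually yields a non-crossing $\sigma$ if and only if $a \leftrightarrow b$ is an outermost arc of $\pi'$. For sufficiency, outermost-ness of $a \leftrightarrow b$ means no arc of $\pi'$ encloses it, so the non-crossing property of $\pi'$ forces the positions to the left of $a$ and strictly between $a$ and $b$ to be internally matched within themselves. The two new nested arcs then straddle only these self-matched blocks and introduce no crossings. For necessity, if some arc $c \leftrightarrow d$ of $\pi'$ encloses $a \leftrightarrow b$, then $c \leftrightarrow d$ persists in $\sigma$ and must be crossed by the longer of the two new nested arcs. This correspondence is the step I expect to require the most care, since it amounts to reading the non-crossing condition off the half-plane picture precisely enough to cover every case.

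Finally, I would observe that inserting the arc $2n-1 \leftrightarrow 0$ as a small bump at the far left of the half-plane diagram of $\pi$ does not obstruct any existing arc from the unbounded region above, so the outermost arcs of $\pi'$ are exactly the inserted arc together with the $k$ outermost arcs of $\pi$. The inserted arc corresponds to the trivial preimage $\sigma = \pi'$, and each of the other $k$ outermost arcs contributes exactly one preimage via the nested replacement, giving the claimed total of $k+1$.
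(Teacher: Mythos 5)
Your proposal is correct and follows essentially the same route as the paper: both identify the preimages of $\pi'$ with its outermost arcs (the inserted arc giving the trivial preimage $\pi'$ itself, and each of the $k$ remaining outermost arcs giving one further preimage via the nested re-pairing of $\{2n-1,0\}$ with $\{a,b\}$, which is exactly the paper's operation $M_i$). The only difference is one of emphasis: you run the classification of arbitrary preimages first and then verify realizability, which makes explicit the exhaustiveness step that the paper compresses into its closing remark that the link formed by $e_{2n-1}$ is necessarily outermost.
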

\begin{proof}
	Given $\pi \in LP_{n-1}$ with exposure number $k$ when cut and unfolded between $2n-2$ and $0$ and letting $\pi^\prime \in LP_n$ be the result of insertion of a strand at $2n-1$ and $0$, note that $\pi^\prime$ has exposure number $k+1$ when cut and unfolded between $0$ and $1$. We seek to define operations $M_i(\cdot)$ such that $e_{2n-1}(M_i(\pi^\prime)) = \pi^\prime$ which are distinct for $i \in \{0, 1, 2, \ldots, k\}$. Let $M_0 (\pi^\prime) = \pi^\prime$. For the remaining $i \in \{1, 2, \ldots, k \}$, label the outermost links $\{1, 2, \ldots, k \}$ from left to right and define $M_i(\pi^\prime)$ as the result of the following process with the $i$th outermost link:
\begin{align*}
	\begin{tikzpicture}[scale=.5]
		\draw (-.5,0) -- (11.5,0);
		\draw[thick] (0,0) arc (180:0:.5);
		\draw[thick,bend left=90] (2,0) to (9,0);
		\draw[thick] (3,0) arc (180:0:.5);
		\draw[thick,bend left=90] (5,0) to (8,0);
		\draw[thick] (6,0) arc (180:0:.5);
		\draw[thick] (10,0) arc (180:0:.5);
		\node at (0,-.5) {\tiny{1}};
		\node at (1,-.5) {\tiny{2}};
		\node at (2,-.5) {$\ldots$};
		\node at (10,-.5) {\tiny{2n-1}};
		\node at (11,-.5) {\tiny{0}};
	\end{tikzpicture} \\
	\to
	\begin{tikzpicture}[scale=.5]
		\draw (-.5,0) -- (11.5,0);
		\draw[thick] (0,0) arc (180:0:.5);
		\draw[thick] (2,0) .. controls +(0,1) and (5.5,2.2) .. (6.5,2.2) .. controls +(1,0) and (11,2.2) .. (11,1.2) arc (0:-180:.5) arc (0:180:.5) -- (9,0);
		\draw[dashed] (10.5,.65) circle (.5);
		\draw[thick] (3,0) arc (180:0:.5);
		\draw[thick,bend left=90] (5,0) to (8,0);
		\draw[thick] (6,0) arc (180:0:.5);
		\draw[thick] (10,0) arc (180:0:.5);
	\end{tikzpicture}\\
	\to
	\begin{tikzpicture}[scale=.5]
		\draw (-.5,0) -- (11.5,0);
		\draw[thick] (0,0) arc (180:0:.5);
		\draw[thick] (2,0) .. controls +(0,1) and (5.5,2.2) .. (6.5,2.2) .. controls +(1,0) and (11,2.2) .. (11,1.2) arc (0:-180:.5) arc (0:180:.5) -- (9,0);
		\draw[thick] (3,0) arc (180:0:.5);
		\draw[thick,bend left=90] (5,0) to (8,0);
		\draw[thick] (6,0) arc (180:0:.5);
		\draw[thick] (10,0) arc (180:0:.5);
		\draw[dashed,fill=white] (10.5,.65) circle (.5);
		\draw[thick,bend right=45] (10.925,.925) to (10.9,.3);
		\draw[thick,bend left=45] (10.075,.925) to (10.1,.3);
	\end{tikzpicture}\\
	\to
	\begin{tikzpicture}[scale=.5]
		\draw (-.5,0) -- (11.5,0);
		\draw[thick] (0,0) arc (180:0:.5);
		\draw[thick, bend left=90] (2,0) to (11,0);
		\draw[thick] (3,0) arc (180:0:.5);
		\draw[thick,bend left=90] (5,0) to (8,0);
		\draw[thick] (6,0) arc (180:0:.5);
		\draw[thick] (9,0) arc (180:0:.5);
	\end{tikzpicture}.
\end{align*}
In the above, we have illustrated $M_2(\cdot)$. Observe that in general, $M_i(\cdot)$ is dependent upon $\pi$, a given cut and unfold, and the insertion of a new strand. In words, this process is carried out by ``dragging'' the $i$th outermost link near to the newly added link between $2n-1$ and $0$ and implementing the change $\raisebox{-.1cm}{\begin{tikzpicture}[scale=.5] \draw[dashed] (0,0) circle (.5); \draw[thick, bend left=45] (45:.5) to (135:.5); \draw[thick, bend right=45] (-45:.5) to (-135:.5);\end{tikzpicture}} \to \raisebox{-.1cm}{\begin{tikzpicture}[scale=.5,rotate=90] \draw[dashed] (0,0) circle (.5); \draw[thick, bend left=45] (45:.5) to (135:.5); \draw[thick, bend right=45] (-45:.5) to (-135:.5);\end{tikzpicture} } $. It is straightforward to check that $e_{2n-1}(M_i(\pi^\prime))=\pi^\prime$. Thus we have $k+1$ distinct basis elements in the preimage of $e_{2n-1}$ given by $M_0(\pi^\prime), \ldots, M_{k}(\pi^\prime)$. To see that this exhausts all possibilities, observe that the operator $e_{2n-1}$ connects the links connected to $2n-1$ and $0$ and that this new link formed is an outermost link. 
\end{proof}
\begin{thm}
	The Temperley-Lieb basis of $LP_n$ is generated from the Temperley-Lieb basis of $LP_{n-1}$ inductively via a succession rule. Let $\pi \in LP_{n-1}$ have exposure number $k$ when cut and unfolded between $2n-2$ and $0$. If $\pi^\prime \in LP_n$ is the link pattern obtained from $\pi$ from an insertion of a link between $2n-1$ and $0$ and cut and unfolded between $0$ and $1$, then there are $k+1$ successors of $\pi$, and they are the preimages of $\pi^\prime$ under $e_{2n-1}$, $M_0(\pi^\prime), M_1(\pi^\prime), \ldots, M_k(\pi^\prime)$, and 
	\begin{equation*}
		LP_n = \bigoplus_{\pi \in LP_{n-1}} \mathrm{span}\{M_0(\pi^\prime),\ldots, M_{k_{\pi}}(\pi^\prime) \}.
	\end{equation*}
	\label{thm:successionRule}
\end{thm}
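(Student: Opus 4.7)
The plan is to reduce the theorem almost entirely to Proposition \ref{prop:preimage}: the proposition already supplies the $k+1$ distinct preimages $M_0(\pi'),\ldots,M_{k}(\pi')$ under $e_{2n-1}$ for any $\pi \in LP_{n-1}$ of exposure number $k$, so all that remains is to check that, as $\pi$ ranges over $LP_{n-1}$, these preimage sets exhaust the Temperley-Lieb basis of $LP_n$ without overlap.

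First I would restate the content of the succession rule: for $\pi \in LP_{n-1}$ of exposure number $k$ (with respect to the cut between $2n-2$ and $0$), the proposition produces $k+1$ preimages of $\pi'$ under $e_{2n-1}$, which by definition are the $k+1$ successors of $\pi$. Both the count and the explicit list $M_0(\pi'),\ldots,M_k(\pi')$ are inherited directly from the proposition, so there is nothing new to prove for this part.

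The substantive step is to exhibit a bijection between the Temperley-Lieb basis of $LP_n$ and the disjoint union $\bigsqcup_{\pi \in LP_{n-1}} \{M_0(\pi'),\ldots,M_{k_\pi}(\pi')\}$. I would do this by running the construction backwards: given any $\sigma \in LP_n$, apply $e_{2n-1}$ to $\sigma$. As noted in the graphical description of $TL_n$, the resulting diagram always contains an outer cap joining $2n-1$ and $0$, so $e_{2n-1}(\sigma)$ is itself a link pattern in which $2n-1$ and $0$ are linked. Deleting this strand (as in the definition of the deletion of a strand at $2n-1$ and $0$) yields a well-defined $\pi \in LP_{n-1}$, and by construction $e_{2n-1}(\sigma) = \pi'$. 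Thus $\sigma$ lies in the preimage of $\pi'$ under $e_{2n-1}$, which by Proposition \ref{prop:preimage} equals $\{M_0(\pi'),\ldots,M_{k_\pi}(\pi')\}$. So $\sigma = M_i(\pi')$ for some $i$, proving surjectivity.

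For disjointness, suppose $M_i(\pi_1') = M_j(\pi_2')$. Applying $e_{2n-1}$ and using the defining property $e_{2n-1}(M_i(\pi'))=\pi'$ from the proposition, we recover $\pi_1' = \pi_2'$; the deletion operation is inverse to insertion, so $\pi_1 = \pi_2$, and the distinctness part of Proposition \ref{prop:preimage} then forces $i = j$. The main (mild) obstacle is verifying this surjectivity rigorously: one must be careful that every $\sigma \in LP_n$ really does produce a link pattern under $e_{2n-1}$ with $2n-1$ and $0$ connected on the outside, which is clear from the annular picture but should be stated. Once that is in place, the partition of the basis is established and the direct sum decomposition
\[
	LP_n = \bigoplus_{\pi \in LP_{n-1}} \mathrm{span}\{M_0(\pi'),\ldots, M_{k_{\pi}}(\pi') \}
\]
follows immediately from the fact that a basis of $LP_n$ decomposes as a disjoint union.
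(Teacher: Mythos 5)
Your proposal is correct and follows essentially the same route as the paper: both arguments observe that $e_{2n-1}$ maps every basis link pattern to one with a link joining $2n-1$ and $0$, identify such patterns with elements of $LP_{n-1}$ via deletion/insertion of that strand, and then partition the Temperley-Lieb basis of $LP_n$ into the fibers of $e_{2n-1}$, which Proposition \ref{prop:preimage} identifies as $\{M_0(\pi'),\ldots,M_{k_\pi}(\pi')\}$. Your write-up merely makes the surjectivity and disjointness of this partition more explicit than the paper does.
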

\begin{proof}
	We observe that the image of the operator $e_{2n-1}$ in $LP_n$ is characterized by the basis containing all link patterns with a link connecting sites $2n-1$ and $0$. Let us call this basis $\beta$. Each element of this basis is indexed by an element of the Temperley-Lieb basis of $LP_{n-1}$ obtained by the insertion of a link between $2n-1$ and $0$. Hence, for each link pattern $\pi$ in the Temperley-Lieb basis of $LP_{n-1}$ we have an associated element of $\beta$ denoted $\pi^\prime$. We then partition the Temperley-Lieb basis of $LP_n$ into the preimages of each element $\pi^\prime \in \beta$ under the map $e_{2n-1}$, and these preimages are spanned by $M_0(\pi^\prime), \ldots, M_k(\pi^\prime)$, as described in the proof of Proposition \ref{prop:preimage}.
\end{proof}

Theorem \ref{thm:successionRule} gives the succession rule which allows us to inductively construct the Temperley-Lieb bases using the structure of the Catalan Tree. Each link pattern $\pi$ is labeled by $k+1$, where $k$ is the number of outermost links when $\pi$ is cut and unfolded between $0$ and $1$. It is convenient to organize the successors of $\pi$ in increasing order of the total number of outermost links. This yields the local picture:
\begin{equation*}
	\begin{tikzpicture}
		[every node/.style={rectangle, minimum width=1.5cm}]
		\node(a) {$\pi$}
			child {node(b1) {$M_1(\pi^\prime)$}}
			child {node (b2) {$M_2(\pi^\prime)$}}
			child {node (b3) {$M_3(\pi^\prime)$}}
			child[white] {node[black] (b4) {$\ldots$}}
			child {node (bk) {$M_k(\pi^\prime)$}}
			child {node(b0) {$M_0(\pi^\prime)$}};
	\end{tikzpicture}
\end{equation*}
where $\pi$ is a Temperley-Lieb basis element of $LP_{n-1}$ with exposure number $k$ when cut and unfolded between $0$ and $1$, and $\pi^\prime$ is the result of inserting a link between $2n-1$ and $0$ to $\pi$, and $M_0, M_1, \ldots, M_k$ are the operations described in the proof of Proposition \ref{prop:preimage}. Observe that $M_k(\pi^\prime)$ has exposure number $k$ when $k\neq0$ and that $M_0(\pi^\prime)$ has exposure number $k+1$, so that this is consistent with the convention to order successors increasing in exposure number. In Figure \ref{fig:lpTree}, we provide an example of the Catalan Tree for link patterns up to level 2. 
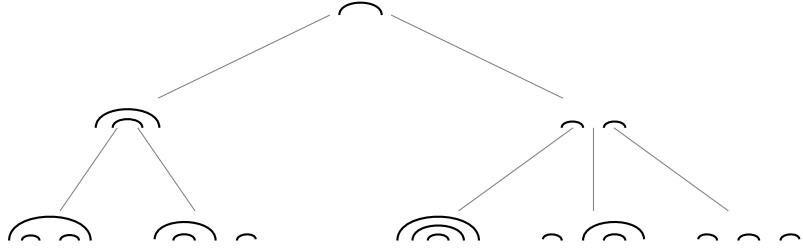
\begin{figure}[h]
	\begin{center}
		\begin{tikzpicture}
			[every node/.style={draw=white,line width=3pt, rectangle,minimum width=3em, minimum height=.7em}, level 1/.style={sibling distance=15em}, level 2/.style={sibling distance=5em}, edge from parent/.style={draw=black!50 }]
	\node(a) {}
		child {node (b1) {}
			child {node (c1) {}}
			child {node (c2) {}}
		}
		child {node (b2) {}
			child {node (c3) {}}
			child {node (c4) {}}
			child {node (c5) {}}
		};
		\draw[thick, bend left=90] (a.-145) to (a.-35);
		\draw[thick, bend left=90] (b1.-135) to (b1.-45);
		\draw[thick, bend left=90] (b1.-155) to (b1.-25);
		\draw[thick, bend left=90] (b2.-155) to (b2.-125);
		\draw[thick, bend left=90] (b2.-55) to (b2.-25);
		\draw[thick, bend left=90] (c1.-152.5) to (c1.-125);
		\draw[thick, bend left=90] (c1.-55) to (c1.-27.5);
		\draw[thick, bend left=90] (c1.-160) to (c1.-20);
		\draw[thick, bend left=90] (c2.-165) to (c2.-55);
		\draw[thick, bend left=90] (c2.-155) to (c2.-125);
		\draw[thick, bend left=90] (c2.-25) to (c2.-15);
		\draw[thick, bend left=90] (c3.-160) to (c3.-20);
		\draw[thick, bend left=90] (c3.-150) to (c3.-30);
		\draw[thick, bend left=90] (c3.-125) to (c3.-55);
		\draw[thick, bend left=90] (c4.-165) to (c4.-155);
		\draw[thick, bend left=90] (c4.-125) to (c4.-15);
		\draw[thick, bend left=90] (c4.-55) to (c4.-25);
		\draw[thick, bend left=90] (c5.-165) to (c5.-155);
		\draw[thick, bend left=90] (c5.-125) to (c5.-55);
		\draw[thick, bend left=90] (c5.-25) to (c5.-15);
		\end{tikzpicture}
	\end{center}
	\caption{The Catalan tree of link patterns up to level 2.}
	\label{fig:lpTree}
\end{figure}

\section{Refined enumerations of link patterns}
\label{sec:enumerations}
We now demonstrate that the Catalan tree construction for link patterns yields a useful bijection to yet another member of the class of Catalan objects, Dyck paths. Refined enumerations of Dyck paths are well known (see \cite{bdpp,Deutsch}), and they induce refined enumerations of link patterns.

\begin{defn}
	A \emph{Dyck path of semilength $n$} is a sequence of $2n+1$ points in $\nats \times \nats$ in which consecutive points differ by one of the vectors $(1,1)$ (an \emph{ascent}) or $(1,-1)$ (a \emph{descent}), the first point in the sequence is $(0,0)$ and the last point  in the sequence is $(2n,0)$. A \emph{peak} of a Dyck path is a point of the sequence preceded by an ascent and followed by a descent. Lastly, the \emph{last descent length} of a Dyck path is the maximal number of consecutive descents which precede the last point $(2n,0)$.
\end{defn}
\begin{prop}{\cite{bdpp,Deutsch}}
	The number of Dyck paths of semilength $n$ are counted by the Catalan numbers. The number of Dyck paths of semilength $n$ and last descent length $k$ is $\binom{2n-k}{n} \frac{k}{2n-k}$. The number of Dyck paths of semilength $n$ and $k$ peaks is $\frac{1}{n}\binom{n}{k} \binom{n}{k-1}$.
\end{prop}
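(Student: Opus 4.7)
The plan is to establish the three claims by standard lattice-path techniques, handling them in turn. For the first claim, I would invoke the classical reflection principle (the Andr\'e argument): the total number of lattice paths from $(0,0)$ to $(2n,0)$ with steps $(1,1)$ and $(1,-1)$ is $\binom{2n}{n}$, and the paths that dip to height $-1$ are in bijection, via reflection across $y=-1$ of the portion preceding the first visit to $-1$, with unrestricted paths from $(0,-2)$ to $(2n,0)$, of which there are $\binom{2n}{n-1}$. Subtracting gives $\binom{2n}{n}-\binom{2n}{n-1}=\frac{1}{n+1}\binom{2n}{n}=C_n$.

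For the second claim, a Dyck path with last descent length exactly $k$ ends with $k$ consecutive descents preceded by an ascent, so its last peak sits at $(2n-k,k)$. Removing the final $k$ descents and the immediately preceding ascent yields a non-negative lattice path from $(0,0)$ to $(2n-k-1,k-1)$ with $n-1$ ascents and $n-k$ descents; conversely any such prefix can be extended uniquely. I would then apply the ballot formula (again by reflection, or via the cycle lemma) which counts non-negative paths from $(0,0)$ to $(p+q,p-q)$ with $p$ ascents and $q$ descents as $\tfrac{p-q+1}{p+1}\binom{p+q}{p}$, giving
\begin{equation*}
  \frac{k}{n}\binom{2n-k-1}{n-1}.
\end{equation*}
A short manipulation of factorials shows this coincides with $\binom{2n-k}{n}\frac{k}{2n-k}$, which finishes this part.

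For the Narayana count, my approach is to encode a Dyck path with exactly $k$ peaks by its run-length data: write it as $A^{a_1}D^{d_1}A^{a_2}D^{d_2}\cdots A^{a_k}D^{d_k}$ with $a_i,d_i\geq 1$ and $\sum a_i = \sum d_i = n$. The number of ordered pairs of compositions of $n$ into $k$ positive parts is $\binom{n-1}{k-1}^2$, but these must be further constrained by the non-negativity of partial sums $A_j\geq D_j$ for $j<k$. I would apply a cycle-lemma argument on the $2n$ cyclic shifts of the underlying step sequence, noting that the peak count is invariant under cyclic rotation and that exactly one shift in each class of $n$ yields a bona fide Dyck path, producing the factor of $\tfrac{1}{n}$ and the Narayana formula $\tfrac{1}{n}\binom{n}{k}\binom{n}{k-1}$. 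An equally clean alternative, which I would mention as a fallback, is to transport the statement through the standard bijection between Dyck paths of semilength $n$ and plane trees with $n$ edges, under which peaks correspond to leaves, and invoke the known enumeration of plane trees by number of leaves.

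The main obstacle is the bookkeeping in the third part: the cycle-lemma step requires one to verify that rotating a step sequence by one position preserves both the total number of peaks and the total step counts, and that within each orbit of size $n$ exactly one rotation is a Dyck path. Parts (1) and (2) are essentially mechanical once the right reflection is identified, so I would spend most of my effort making the cycle-lemma argument precise (or, equivalently, carefully writing down the bijection to plane trees with a prescribed number of leaves).
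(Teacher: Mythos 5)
The paper does not prove this proposition at all --- it is quoted from the literature (\cite{bdpp,Deutsch}) and used as a black box, so any correct proof you supply is extra content rather than a reproduction of the paper's argument. Your first two parts are fine: the reflection count $\binom{2n}{n}-\binom{2n}{n-1}=C_n$ is standard, and the bijection stripping the final $U D^k$ block does carry Dyck paths with last descent length exactly $k$ to nonnegative paths with $n-1$ ascents and $n-k$ descents, after which the ballot formula gives $\frac{k}{n}\binom{2n-k-1}{n-1}=\frac{k}{2n-k}\binom{2n-k}{n}$ as you say.

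The third part, however, has a genuine gap as your primary route. First, the arithmetic cannot work: dividing the unrestricted count $\binom{n-1}{k-1}^2$ by $n$ does not give $\frac{1}{n}\binom{n}{k}\binom{n}{k-1}$ (for $n=3$, $k=2$ you would get $4/3$ instead of $3$). Second, the cycle lemma in the form you invoke does not apply to balanced words: for a sequence of $n$ U's and $n$ D's it is \emph{not} true that exactly one rotation in each orbit is a Dyck path (the word $UUDDUD$ has two Dyck rotations among its six), which is precisely why the standard cycle-lemma proof of $C_n=\frac{1}{2n+1}\binom{2n+1}{n}$ uses $n+1$ ascents against $n$ descents. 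Third, the linear peak count is not rotation-invariant --- rotating $UUDD$ to $DDUU$ destroys the unique $UD$ factor --- so the orbit bookkeeping you flag as the main obstacle would in fact fail, not merely be tedious. A correct completion along combinatorial lines is the identity $N(n,k)=\binom{n-1}{k-1}^2-\binom{n-1}{k}\binom{n-1}{k-2}$, proved by a two-path reflection (Lindstr\"om--Gessel--Viennot) argument on the pair of composition partial-sum sequences you already introduced; alternatively your plane-tree fallback is sound (peaks do correspond to leaves under the contour bijection), but as written it simply defers to another enumeration that itself needs a cycle-lemma proof --- one that works there because {\L}ukasiewicz paths end at height $-1$, where the lemma applies cleanly.
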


Dyck paths may be inductively constructed with the structure of the Catalan tree by the operation of adding a new peak at each of the points along the last descent. For example, a Dyck path with last descent length 3 will have four descendents with last descent lengths 1,2,3, and 4, respectively:
\begin{equation*}
	\begin{tikzpicture}
		[every node/.style={rectangle, minimum width=1.75cm}]
		\node(a) {}
		child {node(b1) {}}
		child {node (b2) {}}
		child {node (b3) {}}
		child {node(b0) {}};
		\draw (a) ++(-.5,.5) -- ++(.25,.25) -- ++(.75,-.75);
		\draw[dashed,gray] (a) ++(-.5,.5) -- ++(-.25,0);
		\draw[fill] (a) ++(-.5,.5) circle (1pt);
		\draw[fill] (a) ++(-.25,.75) circle (1pt);
		\draw[fill] (a) ++(0,.5) circle (1pt);
		\draw[fill] (a) ++(0.25,.25) circle (1pt);
		\draw[fill] (a) ++(0.5,0) circle (1pt);
		\draw (b1) ++(-.5,-.25) -- ++(.25,.25) -- ++(.75,-.75) -- ++(.25,.25) -- ++(.25,-.25);
		\draw[dashed,gray] (b1) ++(-.5,-.25) -- ++(-.25,0);
		\draw[fill] (b1) ++(-.5,-.25) circle (1pt);
		\draw[fill] (b1) ++(-.25,0) circle (1pt);
		\draw[fill] (b1) ++(0,-.25) circle (1pt);
		\draw[fill] (b1) ++(0.25,-.5) circle (1pt);
		\draw[fill] (b1) ++(0.5,-.75) circle (1pt);
		\draw[fill] (b1) ++(0.75,-.5) circle (1pt);
		\draw[fill] (b1) ++(1,-.75) circle (1pt);
		\draw (b2) ++(-.5,-.25) -- ++(.25,.25) -- ++(.5,-.5) -- ++(.25,.25) -- ++(.5,-.5);
		\draw[dashed,gray] (b2) ++(-.5,-.25) -- ++(-.25,0);
		\draw[fill] (b2) ++(-.5,-.25) circle (1pt);
		\draw[fill] (b2) ++(-.25,0) circle (1pt);
		\draw[fill] (b2) ++(0,-.25) circle (1pt);
		\draw[fill] (b2) ++(0.25,-.5) circle (1pt);
		\draw[fill] (b2) ++(0.5,-.25) circle (1pt);
		\draw[fill] (b2) ++(0.75,-.5) circle (1pt);
		\draw[fill] (b2) ++(1,-.75) circle (1pt);
		\draw (b3) ++(-.5,-.25) -- ++(.25,.25) -- ++(.25,-.25) -- ++(.25,.25) -- ++(.75,-.75);
		\draw[dashed,gray] (b3) ++(-.5,-.25) -- ++(-.25,0);
		\draw[fill] (b3) ++(-.5,-.25) circle (1pt);
		\draw[fill] (b3) ++(-.25,0) circle (1pt);
		\draw[fill] (b3) ++(0,-.25) circle (1pt);
		\draw[fill] (b3) ++(0.25,0) circle (1pt);
		\draw[fill] (b3) ++(0.5,-.25) circle (1pt);
		\draw[fill] (b3) ++(0.75,-.5) circle (1pt);
		\draw[fill] (b3) ++(1,-.75) circle (1pt);
		\draw (b0) ++(-.5,-.25) -- ++(.5,.5) -- ++(1,-1);
		\draw[dashed,gray] (b0) ++(-.5,-.25) -- ++(-.25,0);
		\draw[fill] (b0) ++(-.5,-.25) circle (1pt);
		\draw[fill] (b0) ++(-.25,0) circle (1pt);
		\draw[fill] (b0) ++(0,.25) circle (1pt);
		\draw[fill] (b0) ++(0.25,0) circle (1pt);
		\draw[fill] (b0) ++(0.5,-.25) circle (1pt);
		\draw[fill] (b0) ++(0.75,-.5) circle (1pt);
		\draw[fill] (b0) ++(1,-.75) circle (1pt);
	\end{tikzpicture}.
\end{equation*}
Thus, we see that the bijection between Dyck paths and link patterns induced by the Catalan tree maps the last descent length of a given Dyck path to the exposure number of a link pattern relative to a fixed cut and unfold.
\begin{defn}
	Fix a link pattern of $n$ strands and fix a particular cut and unfold. Construct the Catalan tree up to level $n$ and consider the path from the root to the given link pattern. Let the \emph{interaction number} of the link pattern relative to a fixed cut and unfold be the number of edges in this path corresponding to an operation $M_k(\cdot)$ with $k\neq0$. 
\end{defn}
One may observe that a move along an edge of the Catalan tree corresponding to $M_0(\cdot)$ does not increase the interaction number, and all other transitions increase the interaction number by 1. Likewise, in the Dyck path picture, the transition corresponding to a Dyck path with last descent length $k$ going to a Dyck path with last descent length $k+1$ does not increase the number of peaks, and all other transitions increase the number of peaks by 1. Accounting for the fact that the root of the Catalan tree for Dyck paths has one peak and that the root of the Catalan tree for link patterns has interaction number $0$, it follows that the number of peaks in a Dyck path corresponds to the interaction number $+1$ of a link pattern relative to a given cut and unfold.
\begin{cor}
	The number of link patterns of $n$ strands is given by the Catalan numbers. Fix a particular cut and unfold. The number of link patterns of $n$ strands with exposure number $k$ is $\binom{2n-k}{n} \frac{k}{2n-k}$. The number of link patterns of $n$ strands and interaction number $\ell$ is $\frac{1}{n} \binom{n}{\ell+1} \binom{n}{\ell}$.
\end{cor}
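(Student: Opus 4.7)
The plan is to exploit the bijection between link patterns of $n$ strands and Dyck paths of semilength $n$ that is established by the identification of their two generating trees as the Catalan tree, and then to transfer the known refined enumerations of Dyck paths (stated in the proposition just invoked) directly across this bijection.

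First, I would invoke Theorem \ref{thm:successionRule}, which shows that the Temperley-Lieb basis of $LP_n$ is generated from that of $LP_{n-1}$ by a succession rule producing, at a link pattern $\pi$ with exposure number $k$, exactly $k+1$ children. Comparing with the succession rule for Dyck paths given by appending a new peak at each point of the last descent (pictured in the diagram immediately before the corollary), a Dyck path with last descent length $k$ produces $k+1$ children in exactly the same pattern. Because both trees have a single root (the unique link pattern on $2$ points, and the unique Dyck path of semilength $1$, each labeled by an initial exposure/last-descent quantity of $1$), the tree isomorphism is forced and yields a bijection $\Phi : LP_n \to \{\text{Dyck paths of semilength } n\}$ for every $n$.

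Next I would verify the two statistic correspondences that the paragraph preceding the corollary already records. Under $\Phi$, the exposure number of a link pattern (relative to the fixed cut and unfold) equals the last descent length of the corresponding Dyck path, since the succession rule labels nodes by these two equal quantities and $\Phi$ preserves the tree labels. Likewise, the analysis given just above the corollary shows that transitions of type $M_0$ leave the interaction number unchanged and correspond bijectively to transitions that leave the number of peaks unchanged, while all other transitions increment both statistics by one; since the root has interaction number $0$ on the link pattern side and $1$ peak on the Dyck path side, the number of peaks of $\Phi(\pi)$ equals the interaction number of $\pi$ plus $1$.

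With these identifications in hand, the corollary follows immediately by applying the Dyck path enumeration formulas quoted in the proposition: the first claim is Catalan number enumeration transported through $\Phi$; the second follows from the formula $\binom{2n-k}{n}\frac{k}{2n-k}$ for Dyck paths of semilength $n$ and last descent length $k$; and the third follows from $\frac{1}{n}\binom{n}{k}\binom{n}{k-1}$ for Dyck paths with $k$ peaks by substituting $k = \ell + 1$. There is no real obstacle beyond cleanly packaging these steps: the main conceptual work was already done in constructing the Catalan tree for link patterns and identifying how the $M_i$ operations interact with the exposure and interaction statistics, so the corollary is essentially a translation exercise.
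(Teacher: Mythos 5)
Your proposal is correct and follows essentially the same route as the paper: the paper offers no separate proof of the corollary, deriving it exactly as you do from the isomorphism of the two Catalan trees, the identification of exposure number with last descent length and of interaction number $+1$ with the number of peaks, and the quoted Dyck path enumerations. The only point worth making explicit in a careful write-up is the verification that the tree isomorphism really matches children of equal labels in a way compatible with both statistics simultaneously (i.e.\ that the $M_i$ ordering by exposure number aligns with the ordering of Dyck path children by last descent length), which both you and the paper treat as read off from the two succession rules.
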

\bibliographystyle{amsplain}
\bibliography{lpCatalan}
\end{document}